\newtheorem{thm}{Theorem}[section]
\newtheorem{lem}[thm]{Lemma}
\theoremstyle{definition}
\newtheorem{prob}[thm]{Problem}
\newtheorem{conj}[thm]{Conjecture}
\newcommand{\blackged}{\hfill$\blacksquare$}
\newcommand{\whiteged}{\hfill$\square$}
\newcounter{proofcount}
\renewenvironment{proof}[1][\proofname.]{\par
  \ifnum \theproofcount>0 \pushQED{\whiteged} \else \pushQED{\blackged} \fi%
  \refstepcounter{proofcount}
  \normalfont 
  \trivlist
  \item[\hskip\labelsep
        \itshape
    {\bf\em #1}]\ignorespaces
}{%
  \addtocounter{proofcount}{-1}
  \popQED\endtrivlist
}
\begin{document}

\begin{center}
\textbf{\large{A Note on the Existence of Indecomposable Essential Submodules of Direct Sums of Copies of the Ring of Quotients of Ore Domains}}
\end{center}

\begin{center}
Juan Orendain
\end{center}

\noindent We study the problem of existence of essential indecomposable submodules of direct sums of copies of the ring of quotients of Ore domains. We provide, for each Ore domain $D$, with at least three non-associate central irreducibles, and with no non-zero infinitely divisible elements, a lower bound for the supremum of all cardinals $\kappa$ such that the direct sum of $\kappa$ copies of the ring of quotients of $D$ contains indecomposable essential $D$-submodules, and a lower bound for the number of times this bound is attained up to isomorphisms. We provide explicit examples illustrating these results. 

\section{Introduction}

\noindent Solutions to the problem of existence of essential indecomposable submodules of direct sums of copies of the ring of quotients of the integers, that is, solutions to the problem of existence of indecomposable torsion-free abelian groups of given ranks, can be traced back to \cite{F4} or even \cite{K}. We study, using essentially ideas appearing in \cite{F4}, the more general problem of existence and uniqueness of essential indecomposable submodules of direct sums of copies of the ring of quotients of Ore domains. For a given Ore domain $D$, admitting sets of at least three non-associate central irreducibles, and not admitting non-zero infinitely divisible elements, we provide a lower bound for the supremum of all cardinals $\kappa$ such that the direct sum of $\kappa$ copies of the ring of quotients of $D$ admits essential indecomposable $D$-submodules, and a lower bound for the number of times this bound is attained up to isomorphisms. We provide explicit examples of these types of calculations.

In what follows the letter $D$ will, unless otherwise stated, always stand for a left Ore domain, the letter $E$ will always denote the left ring of quotients of $D$, the word \textsl{module} will mean a left unital $D$-module, and the word \textsl{morphism} will always mean a morphism in $D$-Mod. The choice of side on which the Ore condition is required on $D$ is arbitrary, and it is made for convenience, it should be evident nevertheless that all statements and proofs work for right Ore domains just as well. We will say that an element $a\in D$ is irreducible in $D$ if the equation $a=bc$ in $D$ implies that either $b$ or $c$ is a unit in $D$. Given central irreducibles $a,b\in D$, we will say that $a$ and $b$ are associates in $D$ if there exists a unit $u\in D$ such that $a=ub$. We will say that a non-zero element $a\in R$ is infinitely left divisible in $R$ if there exists $b\in R\setminus U(R)$ such that for every $n\geq 1$, there exists $x_n\in R$ such that $a=b^nx_n$. We will denote by $\mathfrak{S}$ the class of all modules $M$ such that $M$ can be embedded in a direct sum of copies of $E$. Given $M\in\mathfrak{S}$, we will write $rk(M)$ for the minimal cardinal $\kappa$ such that $M$ can be embedded in $E^{(\kappa)}$. Finally, we will denote by $\chi(D)$ the supremum of all cardinals $\kappa$ such that there exists an indecomposable module $M$ such that $rk(M)=\kappa$. $\chi(D)$ will thus take values in the class of all cardinals together with an artificial element $*$ satisfying the convention that $\kappa+*=*$ for every cardinal $\kappa$. Observe that $\chi(D)=1$ if $D$ is a division ring. More generally, in \cite{K} it is proven that if $D$ is a valuation ring, then $\chi(D)=1$ if and only if $D$ is a maximal discrete valuation ring. In particular, for every prime $p$, the ring of $p$-adic integers, $\mathbb{Z}_p$, satisfies equation 

\[\chi(\mathbb{Z}_p)=1\]

\noindent We will be interested in statements of the form:

\[\chi(D)\geq \kappa\]

\noindent \textit{and this bound is attained at least $\mu$ times up to isomorphisms}. Any such statement should be translated as: \textit{There exist at least $\mu$ non-isomorphic indecomposable modules $M$, in $\mathfrak{S}$, such that $rk(M)=\kappa$}. Or equivalently, as: \textit{There exist at least $\mu$ non-isomorphic essential indecomposable submodules of the direct sum of $\kappa$ copies of $E$}. The following will be the main result of this note

\begin{thm}
Let $\kappa_1,\kappa_2$ be cardinals. Suppose $D$ admits sets of $2\kappa_1+\kappa_2$ non-associate central irreducibles and no non-zero element of $R$ is infinitely left divisible in $R$. Then

\[\chi(D)\geq 2^{\kappa_1}\]

\noindent and this bound is attained at least $2^{\kappa_2}-1$ times up to isomorphisms.  
\end{thm}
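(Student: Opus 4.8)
The plan is to transplant, into the setting of localizations of Ore domains, the classical realization of indecomposable torsion-free groups of prescribed rank (in the spirit of the constructions recalled in the introduction): one builds an essential submodule of a direct sum of copies of $E$ by prescribing infinite divisibility of certain elements along a carefully chosen web of relations. Fix sets $I$, $J$ with $|I|=\kappa_1$, $|J|=\kappa_2$, and select from the $2\kappa_1+\kappa_2$ given central irreducibles a family $\{p_\alpha,q_\alpha\}_{\alpha\in I}\cup\{r_\gamma\}_{\gamma\in J}$ of pairwise non-associate irreducibles. Index the coordinates of $E^{(\mathcal P(I))}$ by the subsets $T\subseteq I$ (so there are $2^{\kappa_1}$ of them), writing $e_T$ for the corresponding basic element. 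The first preliminary step is a rigidity lemma extracted from the no--infinite--divisibility hypothesis: if $C$ is a central multiplicative set generated by some of the chosen irreducibles, then $C^{-1}D\subseteq E$ contains $c^{-n}$ for all $n$ exactly when $c$ (or an associate) already lies in $C$; consequently, for a submodule $N$ of a direct sum of copies of $E$, the set of central irreducibles by which a given $v\in N$ is infinitely divisible in $N$, and the set of those by which a coordinate projection $\pi_\lambda(N)\subseteq E$ is infinitely divisible, are genuine invariants one can read off coordinatewise. I expect this lemma, together with the central--factorization behaviour it entails, to be the workhorse throughout.

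Next, the construction. Let $M$ be the $D$-submodule of $E^{(\mathcal P(I))}$ generated by: all $e_T$; the elements $p_\alpha^{-n}e_T$ for $\alpha\in T$ and $q_\alpha^{-n}e_T$ for $\alpha\notin T$ (so the coordinate $T$ acquires the ``private type'' $\{p_\alpha:\alpha\in T\}\cup\{q_\alpha:\alpha\notin T\}$, and these types are pairwise distinct as $T$ ranges over $\mathcal P(I)$); and, for each edge $\{T,T'\}$ of a fixed spanning tree of the Hamming graph on $\mathcal P(I)$, a gluing element $s^{-n}(e_T-e_{T'})$ where the irreducible $s=s(T,T')$ is chosen among the $p_\alpha,q_\alpha$ to lie outside the private types of both $T$ and $T'$ (possible since $T$ and $T'$ differ in exactly one coordinate, hence either share or jointly omit some other coordinate). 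For non-empty $G\subseteq J$ let $M_G$ be obtained from $M$ by additionally adjoining $r_\gamma^{-n}e_T$ for all $T$ and all $\gamma\in G$, $n\ge 1$. Each $M_G$ lies in $\mathfrak S$, and $rk(M_G)=2^{\kappa_1}$: the $e_T$ are $E$-linearly independent and $M_G\supseteq\bigoplus_T De_T$, and a $D$-embedding $M_G\hookrightarrow E^{(\kappa)}$ keeps the $e_T$ independent after clearing denominators via the Ore condition (using that $E$ is a division ring), so $\kappa\ge 2^{\kappa_1}$, while $M_G\subseteq E^{(\mathcal P(I))}$ gives $\kappa\le 2^{\kappa_1}$.

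The heart of the matter is indecomposability of each $M_G$. Given an idempotent $\varphi\in\operatorname{End}_D(M_G)$, extend it to the idempotent $E$-linear map $\varphi\otimes E$ of $E^{(\mathcal P(I))}$ and write $\varphi(e_T)=\sum_{T'}c_{T,T'}e_{T'}$ with $c_{T,T'}\in E$, finitely many nonzero. Applying $\varphi$ to the infinite-divisibility relations and invoking the rigidity lemma forces $c_{T,T'}=0$ unless the private type of $T$ is contained in the type of $\pi_{T'}(M_G)$; the intended route is to exploit the distinctness of the private types (together with the $q$-side relations) to upgrade this to $c_{T,T'}=0$ for $T\neq T'$, i.e.\ $\varphi$ is diagonal, with diagonal entries idempotent in a domain, hence $0$ or $1$. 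Then each gluing relation $s^{-n}(e_T-e_{T'})\in M_G$ forces $c_{T,T}=c_{T',T'}$ along that tree edge — precisely because $s$ was chosen outside the private types of $T$ and $T'$, so $s^{-n}e_T\notin M_G$ — and connectedness of the spanning tree yields $\varphi=0$ or $\varphi=\mathrm{id}$. The delicate point I expect to fight with is exactly this diagonality step: the gluing irreducibles unavoidably enlarge the types of the coordinates they touch, so the argument must be run with a parsimonious gluing pattern (bounded degree, private types kept as small as the data permits) and, if needed, iterated using $\varphi^2=\varphi$, so that the type constraints still pin $\varphi$ to its diagonal; making this bookkeeping airtight is the real work.

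Finally, to separate the $M_G$ with $\varnothing\neq G\subseteq J$: each $r_\gamma$ with $\gamma\in G$ satisfies $M_G=r_\gamma M_G$, whereas for $\gamma\notin G$ one has $e_T\notin r_\gamma M_G$ since $r_\gamma^{-1}\notin\pi_T(M_G)$ by the rigidity lemma; hence $\{\gamma\in J:M_G=r_\gamma M_G\}=G$, and since $\{\gamma: N=r_\gamma N\}$ is an isomorphism invariant of $N$, distinct non-empty $G$ yield non-isomorphic $M_G$. Each such $M_G$ being indecomposable, of rank $2^{\kappa_1}$, and in $\mathfrak S$, this produces $2^{\kappa_2}-1$ non-isomorphic witnesses to $\chi(D)\ge 2^{\kappa_1}$, which is what the theorem asserts.
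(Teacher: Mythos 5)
The overall blueprint here is the right one and is essentially the paper's: use $2\kappa_1$ of the irreducibles to manufacture $2^{\kappa_1}$ pairwise ``rigid'' rank-one pieces distinguished by their infinite-divisibility types, glue them into a single indecomposable module of rank $2^{\kappa_1}$, and use the remaining $\kappa_2$ irreducibles to produce $2^{\kappa_2}-1$ isomorphism classes. But the proof is not complete: you explicitly defer the indecomposability of $M_G$ (``making this bookkeeping airtight is the real work''), and that step is the entire content of the theorem. Worse, the gluing scheme you chose makes that step genuinely problematic rather than merely tedious. First, for $\kappa_1=1$ there is no admissible gluing irreducible at all: the two coordinates $\varnothing$ and $\{\alpha_0\}$ have types $\{q_{\alpha_0}\}$ and $\{p_{\alpha_0}\}$, and every $p_\alpha,q_\alpha$ lies in one of them. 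Second, by adjoining $s^{-n}(e_T-e_{T'})$ for all $n$ with $s$ drawn from the same family $\{p_\alpha,q_\alpha\}$ that defines the types, you force $s^{-n}$ into the coordinate projections $\pi_T(M_G)$ and $\pi_{T'}(M_G)$, so the ``private types'' are no longer the clean sets $\{p_\alpha:\alpha\in T\}\cup\{q_\alpha:\alpha\notin T\}$; a vertex of a spanning tree of the Hamming graph on $\mathcal{P}(I)$ can have arbitrarily large degree, so a single coordinate gets polluted by many gluing irreducibles, and the incomparability of types on which your diagonality argument rests is destroyed. A third, smaller defect: the separating invariant $\{\gamma: M_G=r_\gamma M_G\}$ need not equal $G$, because in a general domain there is no partial-fractions identity putting $r_\gamma^{-1}p_\alpha^{-n}e_T$ into $D[1/r_\gamma]e_T+D[1/p_\alpha]e_T$ (already $1/xy\notin k[x,y][1/x]+k[x,y][1/y]$); you should instead use the element-wise invariant ``the set of $r_\gamma$ by which some non-zero element of $M$ is infinitely divisible,'' which does work.

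The paper sidesteps all three difficulties by changing the gluing, and it is worth seeing how. The gluing denominators are taken from the third, disjoint family $\{c_\delta\}$, never from the $\{a_\gamma,b_\gamma\}$ that define the types; each gluing relation is a \emph{single} element $\xi_\lambda^{-1}(a_\lambda+b_\lambda)$ (finite divisibility only), so the infinite-divisibility types of the coordinates are untouched; and the pattern is a star around a hub $M_{\lambda_0}$ rather than a spanning tree. Indecomposability is then proved without ever diagonalizing an idempotent matrix over $E$: since $\mathrm{Hom}(M_A,M_B)=0$ for $A\neq B$ (because some $d_\gamma$ infinitely divides $1$ in $M_A$ but infinitely divides no non-zero element of $M_B$ --- this is where the no-infinite-divisibility hypothesis enters), each $M_A$ is strongly invariant and hence lands entirely inside one summand of any decomposition $M=N\oplus L$; a gluing element $\xi_\lambda^{-1}(a_\lambda+b_\lambda)$ with $M_{\lambda_0}\leq N$ and $M_\lambda\leq L$ would then make $a_\lambda$ and $b_\lambda$ divisible by $\xi_\lambda$ in their own pieces, contradicting the choice of $a_\lambda,b_\lambda$; essentiality of $\bigoplus M_A$ finishes. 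If you replace your spanning-tree-with-infinite-divisibility gluing by this star-with-one-fraction gluing (and fix the separating invariant as above), your argument closes; as written, the key step is open and the construction as specified would not support it.
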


\noindent We postpone a proof of Theorem 1.1 until section 3. In section 2 we present explicit examples of Ore domains satisfying the conditions of Theorem 1.1.
 
\section*{Acknowledgements}
\noindent The author would like to dedicate this paper to the late professor Francisco Raggi, without whose attention, guidance, and friendship this paper would have not been possible

\section{Examples}

\noindent In this section we provide explicit examples of Ore domains satisfying the conditions of Theorem 1.1. Observe first that left noetherian domains are examples of left Ore domains that do not admit non-zero infinite left divisible elements. We begin by presenting examples of commutative domains satisfying the conditions of Theorem 1.1. 

\begin{enumerate}
\item Let $a\in\mathbb{Z}$ be squarefree. Let $\Omega_1,\Omega_2$ be (possibly empty) finite sets such that $\Omega_1\cup\Omega_2$ is algebraically independent over $\mathbb{Z}$. If in this case we make $D$ to be equal to $\mathbb{Z}[\sqrt{a}][\Omega_1][[\Omega_2]]$, then $D$ is a commutative noetherian domain admitting sets of $\aleph_0$ non-associate irreducibles. Thus, by Theorem 1.1 we have

\[\chi(D)\geq 2^{\aleph_0}\] 

\noindent and this bound is attained at least $2^{\aleph_0}$ times up to isomorphisms. We consider this result as a generalization of results obtained in \cite{F3}.

\item Let $k$ be a field. Let $\Omega_1,\Omega_2$ be finite sets such that $\Omega_1\cup\Omega_2$ is algebraically independent over $k$. Suppose that $\Omega_1\neq\emptyset$. If in this case we make $D$ to be equal to $k[\Omega_1][[\Omega_2]]$, then $D$ is again a commutative noetherian domain, now admitting sets of $\aleph_0\left|k\right|$ non-associate irreducibles. Thus, by Theorem 1.1 we now have

\[\chi(D)\geq 2^{\aleph_0\left|k\right|}\]

\noindent and this bound is attained at least $2^{\aleph_0\left|k\right|}$ times up to isomorphisms.
 
\item Let $R$ be a unique factorization domain. Let $a\in R$. Suppose $a$ admits $n$ ($n\geq 3$) non-associate prime divisors in $R$. If in this case we make $D$ to be equal to the localization $R_{(a)}$, then $D$ is a commutative domain not admitting infinitely divisible elements and admitting exactly $n$ non-associate irreducibles. Thus, by Theorem 1.1 we now have

\[\chi(D)\geq 2^{\left\lfloor n/2\right\rfloor-1}\]  

\item Let $k$ be an algebraically closed field. Let $V$ be an algebraic variety over $k$. Let $a\in V$. Suppose $V$ is smooth at $a$. In this case, if we make $D$ to be equal to $\mathcal{O}_{V,a}$, then $D$ is, by the Auslander-Buchsbaum Theorem \cite{AB}, a noetherian UFD. Thus, if $V$ admits sets of $2\kappa_1+\kappa_2$ subvarieties containing $a$, $D$ admits sets of $2\kappa_1+\kappa_2$ non associate irreducibles. In particular, if $V$ is a hyperplane of dimension $\geq 2$, by Theorem 1.1 we have

\[\chi(D)\geq 2^{\left|k\right|}\]

\noindent and this bound is attained at least $2^{\left|k\right|}$ times up to isomorphisms. 

\end{enumerate}

\noindent Observe that example 1 above generalizes the results of \cite{F3}. Observe also that the bounds obtained in this example are weak compared to those obtained in \cite{F4}. We regard the following conjecture as a possible generalization of the results of \cite{F4} and a possible improvement of Theorem 1.1

\begin{conj}
There exists a model in ZFC in which, for every pair of cardinals $\kappa_1,\kappa_2$, and for every Ore domain $D$, such that $D$ admits infinite sets of non-associate central irreducibles and no non-zero element of $D$ is infinitely left divisible in $R$, we have

\[\chi(D)\geq 2^{\kappa_1}\]

\noindent and this bound is attained at least $2^{\kappa_2}-1$ times up to isomorphisms.
\end{conj}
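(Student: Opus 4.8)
Since $D$ is a left Ore domain, $E$ is a division ring, so $E^{(\kappa)}$ is an $E$-vector space of dimension $\kappa$; a $D$-submodule $M\subseteq E^{(\kappa)}$ is essential precisely when it spans $E^{(\kappa)}$ over $E$, in which case $rk(M)=\kappa$, and lying in $\mathfrak{S}$ just means being torsion-free. So the theorem asks for $2^{\kappa_2}-1$ pairwise non-isomorphic indecomposable torsion-free $D$-modules whose $E$-span has dimension $2^{\kappa_1}$. Enumerate the given non-associate central irreducibles as $\{p_\alpha,q_\alpha:\alpha<\kappa_1\}\cup\{r_\beta:\beta<\kappa_2\}$, and for a central irreducible $t$ let $D_t\subseteq E$ be obtained by inverting $t$ (legitimate since $t$ is central). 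The arithmetic engine will be a family of \emph{separation lemmas}: from the hypothesis that no nonzero element of $D$ is infinitely left divisible one must deduce that, in a localization of $D$ built from finitely many of the chosen irreducibles, no nonzero element is divisible by arbitrarily high powers of another of them; equivalently such intersections $\bigcap_n t^n(\,\cdot\,)$ vanish.

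\noindent Construction. Index a basis $(e_f)_{f\in I}$ of $V:=E^{(I)}$ by the set $I=\{0,1\}^{\kappa_1}$ of functions $f:\kappa_1\to\{0,1\}$, so $|I|=2^{\kappa_1}=\dim_E V$. Fix a connected graph $G$ on vertex set $I$ (say a spanning tree), and for each nonempty $S\subseteq\kappa_2$ distribute the indices in $S$ over the edges of $G$ so that every $\beta\in S$ labels at least one edge. Let $M_S\subseteq V$ be the $D$-submodule generated by the sets $D_{p_\alpha}e_f$ for $f(\alpha)=0$, the sets $D_{q_\alpha}e_g$ for $g(\alpha)=1$, and, for each edge $\{f,g\}$ of $G$ carrying a label $\beta$, the set $D_{r_\beta}(e_f+e_g)$. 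Informally: $e_f$ is made divisible by every power of $p_\alpha$ when $f(\alpha)=0$ and of $q_\alpha$ when $f(\alpha)=1$, and the $e_f$ are glued along $G$ by $r_\beta$-divisibility. (When $\kappa_1=0$ this degenerates to a rank-one submodule of $E$ got by inverting $\{r_\beta:\beta\in S\}$, and the rest of the analysis is classical.) Each $e_f$ lies in $M_S$, so $M_S$ spans $V$ and, being contained in $V$, is torsion-free; hence $M_S\in\mathfrak{S}$ and $rk(M_S)=2^{\kappa_1}$.

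\noindent Indecomposability. Let $M_S=A\oplus B$ with projection $\pi$; since $M_S$ spans $V$, $\pi$ extends to an $E$-linear idempotent $\pi$ of $V$. Because, for every $\alpha$, the vector $e_f$ is divisible in $M_S$ by all powers of $p_\alpha$ when $f(\alpha)=0$ and by all powers of $q_\alpha$ when $f(\alpha)=1$, applying $\pi$ and the separation lemma coordinatewise shows $\pi(e_f)$ is supported on $\{h\in I:h(\alpha)=f(\alpha)\}$; intersecting over all $\alpha$ forces $\pi(e_f)=a_fe_f$ for some $a_f\in E$. For an edge $\{f,g\}$ of $G$ with label $\beta$, $e_f+e_g$ is divisible in $M_S$ by all powers of $r_\beta$, so $\pi(e_f+e_g)=a_fe_f+a_ge_g$ must again be such; computing in the plane $Ee_f\oplus Ee_g$ — and here using the exact description of which scalars $a$ keep $ae_f$ inside $M_S$ — forces $a_f=a_g$ and lands the common value in $D$. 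Connectedness of $G$ now yields a single $a\in D$ with $\pi=a\cdot\mathrm{id}$; idempotency gives $a^2=a$, so $a\in\{0,1\}$ as $D$ is a domain, i.e. $A=0$ or $B=0$. Thus $M_S$ is indecomposable.

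\noindent Non-isomorphism, and the hard part. For torsion-free $M$ let $\mathrm{div}(M)$ be the set of associate-classes $[t]$ of central irreducibles $t$ such that $M$ has a nonzero element divisible by all powers of $t$; this is an isomorphism invariant. The separation lemmas give $\mathrm{div}(M_S)=\{[p_\alpha],[q_\alpha]:\alpha<\kappa_1\}\cup\{[r_\beta]:\beta\in S\}$: the first part occurs for every $S$, an $r_\beta$ with $\beta\in S$ contributes through its coupling edge, and an $r_\beta$ with $\beta\notin S$ has no nonzero infinitely-divisible multiple in $M_S$. Since the $r_\beta$ are pairwise non-associate and non-associate to the $p_\alpha,q_\alpha$, distinct nonempty $S$ give non-isomorphic $M_S$, which is the required count of $2^{\kappa_2}-1$ (the empty $S$ is excluded because, with no coupling, $M_\emptyset$ need not be indecomposable). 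The step I expect to fight is the separation lemmas together with the attendant identification of $\{a\in E:ae_f\in M_S\}$ in the indecomposability argument: over a unique factorization domain these are instant, but over a general Ore domain ``irreducible'' is strictly weaker than ``prime'' — a chosen irreducible can even become a unit after one inverts the others — so one must either harvest the needed independence purely from the no-infinite-divisibility hypothesis or, more robustly, pre-select the irreducibles, or replace the naive $D_t$ by a more carefully chosen $D$-submodule of $E$, so that the chosen irreducibles stay non-units and mutually ``coprime.'' Once that bookkeeping is in place, the remainder is the standard rigid-system argument in the style of \cite{F4}.
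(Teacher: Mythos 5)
The statement you are addressing is Conjecture 2.1, which the paper does not prove --- it is explicitly left open, and it is left open precisely because the argument you propose does not reach it. Your construction enumerates the available non-associate central irreducibles as $\{p_\alpha,q_\alpha:\alpha<\kappa_1\}\cup\{r_\beta:\beta<\kappa_2\}$, i.e.\ it consumes $2\kappa_1+\kappa_2$ pairwise non-associate central irreducibles, one per ``direction'' of divisibility. That is exactly the hypothesis of Theorem 1.1, and your sketch is essentially the paper's own proof of Theorem 1.1 (Lemmas 3.1 and 3.2): rank-one building blocks of the form $\sum D[1/d_\gamma]\subseteq E$ rigidified by their divisibility types, glued along a connected configuration by elements $\xi^{-1}(a+b)$. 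But the conjecture only assumes that $D$ admits an \emph{infinite} set of non-associate central irreducibles --- possibly just $\aleph_0$ of them --- while $\kappa_1,\kappa_2$ range over \emph{all} cardinals. As soon as $2\kappa_1+\kappa_2$ exceeds the supply of irreducibles, your indexing is impossible and the construction cannot even be set up; so the proposal establishes nothing beyond Theorem 1.1. The entire difficulty of the conjecture (modelled on \cite{F4}, where $\mathbb{Z}$ has only countably many primes yet indecomposable torsion-free groups of arbitrarily large rank exist) is to manufacture rigidity in rank $2^{\kappa_1}$ out of a fixed countable stock of irreducibles, which requires a genuinely different mechanism --- e.g.\ algebraically independent elements of completions or set-theoretic rigid-system constructions --- and this is also why the statement is phrased as ``there exists a model in ZFC,'' a qualifier your argument never engages.

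A secondary remark: the step you flag as the one you ``expect to fight'' --- the separation lemmas over a general Ore domain, where an irreducible need not be prime and may become a unit after inverting the others --- is a legitimate concern, but it is one your proposal shares with the paper's proof of Lemma 3.2 and is not the obstruction specific to the conjecture. The obstruction is the cardinality of the supply of irreducibles relative to the arbitrary cardinals $\kappa_1,\kappa_2$.
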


\noindent We now present examples of non-commutative Ore domains satisfying the conditions of Theorem 1.1. Let $R$ be a commutative noetherian domain, let $\sigma$ be an automorphism of $R$, and let $\delta$ be a $\sigma$-derivation of $R$. In this case, the skew polynomial ring $R[y;\sigma,\delta]$ is a left-right noetherian domain (see \cite{R}), non-commutative if and only if $\sigma\neq id$ or $\delta\neq 0$. Observe that, in this case, every irreducible in $R$ is irreducible in $R[y;\sigma,\delta]$, as a degree 0 polynomial, and that a central element $a\in R$ is central in $R[y;\sigma,\delta]$, as a degree 0 polynomial, if and only if identities

\[\sigma(a)=a \ \mbox{and} \ \delta(a)=0\]

\noindent hold. That is, an element $a\in R$ is central in $R[y;\sigma,\delta]$, as a degree 0 polynomial, if and only if $a$ is central in $R$, $a$ is fixed by $\sigma$, and $a$ is a constant with respect to $\delta$. Moreover, observe that two central irreducibles in $R$ are associates in $R$ if and only if they are associates in $R[y;\sigma,\delta]$. We conclude that, if $R$ is a commutative noetherian domain, $\sigma$ is an automorphism of $R$ and $\delta$ is a $\sigma$-derivation of $R$, $\sigma\neq id$ or $\delta\neq 0$, and $R$ admits sets of $\kappa$ non-associate central irreducibles, fixed by $\sigma$, and constant with respect to $\delta$, then $R[y;\sigma,\delta]$ is a non-commutative noetherian domain, admitting sets of $\kappa$ non-associate central irreducibles. The following are explicit examples of non-commutative noetherian domains, satisfying the conditions of Theorem 1.1, given by the above construction.

\begin{enumerate}
\item Let $S$ be a commutative noetherian domain. Let $\Omega$ be a finite nonempty set, algebraically independent over $S$. Let $\omega\in\Omega$. Let $R$ be the ring of ploynomials $S[\Omega]$, let $\sigma$ be equal to $0$ and let $\delta$ be equal to the formal partial derivative $\partial/\partial\omega$. Suppose $S$ admits sets of $\kappa$ non-associate irreducibles. If in this case we make $D$ to be equal to $R[y;\delta]$, then $D$ is a non-commutative noetherian domain admitting sets of $\kappa$ non-associate central irreducibles. In particular, if we make $S$ to be equal to $\mathbb{Z}[\sqrt{a}][\Omega_1][[\Omega_2]]$ as in example 1 above, by Theorem 1.1 we have

\[\chi(D)\geq 2^{\aleph_0}\]

\noindent and this bound is attained at least $2^{\aleph_0}$ times up to isomorphisms. Moreover, if we make $S$ to be equal to $k[\Omega_1][[\Omega_2]]$ as in example 2 above, again, by Theorem 1.1 we have

\[\chi(D)\geq 2^{\aleph_0\left|k\right|}\]

\noindent and this bound is attained at least $2^{\aleph_0\left|k\right|}$ times up to isomorphisms. Finally, if we make $S$ to be equal to $R_{(a)}$ as in example 3 above, again, by Theorem 1.1 we have

\[\chi(D)\geq 2^{\left\lfloor n/2\right\rfloor}-1\] 

\item Let $k$ be a field. Let $\sigma$ be an automorphism of $k$ such that $\sigma\neq id$. Let $\Omega$ be a finite set, algebraically independent over $k$. Let $R$ be equal to $k[\Omega]$. Let $\tilde{\sigma}$ be the only extension of $\sigma$ to $R$ such that $\tilde{\sigma}(\omega)=\omega$ for every $\omega\in\Omega$. Suppose $\sigma$ fixes nonzero elements of $k$. If in this case we make $D$ to be equal to $R[y;\tilde{\sigma}]$, then $D$ is a non-commutative noetherian domain admitting sets of $\aleph_0\left|k\right|$ non-associate central irreducibles. In particular, if we make $k$ to be equal to the algebraic closure, $\overline{\mathbb{F}}_p$, of $\mathbb{F}_p$ for some prime $p$, and we make $\sigma$ to be equal to any power of the Frobenius automorphism of $k$, then, by Theorem 1.1 we have

\[\chi(D)\geq 2^{\aleph_0}\]

\noindent and  this bound is attained at least $2^{\aleph_0}$ times up to isomorphisms. Moreover, if we now make $k$ to be equal to $\mathbb{C}$ and $\sigma$ to be equal to the complex conjugation in $\mathbb{C}$, then, again by Theorem 1.1 we have

\[\chi(D)\geq 2^\mathfrak{c}\]

\noindent and this bound is attained at least $2^\mathfrak{c}$ times up to isomorphisms.
\end{enumerate}

\noindent We end this section with the following problem related to examples 1 and 2 presented above.

\begin{prob}
Compute lower bounds for the following cardinals.
\begin{enumerate}
\item $\chi(k[y;\delta])$, where $(k,\delta)$ is any differential field.
\item $\chi(k[x][y;d])$, where $k$ is any field and $d$ denotes the formal derivative, with respect to variable $x$, in $k[x]$.
\item $\chi(\overline{\mathbb{F}}_p[y;\tilde{\sigma}])$, where $\sigma$ denotes the Frobenius automorphism in $\overline{\mathbb{F}}_p$
\item $\chi(\mathbb{C}[y,\tilde{\sigma}])$, where $\sigma$ denotes the complex conjugation on $\mathbb{C}$.
\end{enumerate}
\end{prob}

\section{Proof of Theorem 1.1}

\noindent In this final section we reduce the proof of Theorem 1.1 to the proof of a couple of lemmas. Observe first that the rank, $rk(M)$, of a module $M\in\mathfrak{S}$, is characterized by equation 

\[dim_EE\otimes_D M=rk(M)\]

\noindent or equivalently, by the fact that $M$ can be essentially embedded in $E^{(rk(M))}.$

\begin{lem}
Let $\left\{M_\lambda:\lambda\in\Lambda\right\}$ be a collection of modules. Let $\lambda_0\in \Lambda$ and for each $\lambda\in\Lambda$ such that $\lambda\neq \lambda_0$, let $a_\lambda\in M_\lambda$, $b_\lambda\in M_{\lambda_0}$, and $\xi_\lambda\in D\setminus \left\{0\right\}$. Suppose 

\begin{enumerate}
\item $M_\lambda$ is indecomposable for all $\lambda\in\Lambda$.
\item $Hom(M_\lambda ,M_\mu)=\left\{0\right\}$ for all $\lambda, \mu\in\Lambda$ with $\lambda\neq\mu$.
\item $a_\lambda$ is not divisible by $\xi_\lambda$ in $M_\lambda$ for each $\lambda\in\Lambda\setminus\left\{\lambda_0\right\}$.
\item $b_\lambda$ is not divisible by $\xi_\lambda$ in $M_{\lambda_0}$ for each $\lambda\in\Lambda\setminus\left\{\lambda_0\right\}$.
\end{enumerate}
 
\noindent Then the $D$-submodule

\[M=\bigoplus_{\lambda\in\Lambda}M_\lambda +\sum_{\lambda\in\Lambda\setminus\left\{\lambda_0\right\}}D\xi_\lambda^{-1}(a_\lambda+b_\lambda)\]

\noindent of $\bigoplus_{\lambda\in\Lambda}(E\otimes_D M_\lambda)$ is indecomposable. Moreover, if $M_\lambda\in\mathfrak{S}$ for each $\lambda\in \Lambda$, then

\[rk(M)=\left|\Lambda\right|\sum_{\lambda\in \Lambda}rk(M_\lambda)\]
\end{lem}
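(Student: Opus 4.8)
The plan is to verify the two assertions in turn. For indecomposability, suppose $M = A \oplus B$ as $D$-modules. Tensoring with $E$ gives $E \otimes_D M = (E \otimes_D A) \oplus (E \otimes_D B)$ inside $\bigoplus_\lambda (E \otimes_D M_\lambda)$, and since each $E \otimes_D M_\lambda$ is a vector space over the division ring $E$, the projections of $A$ and $B$ onto the factors $E \otimes_D M_\lambda$ are $D$-linear maps. The first step is to observe that each $M_\lambda$ sits inside $M$ (as the $\lambda$-summand of $\bigoplus_\mu M_\mu$), that $M_\lambda$ is indecomposable by hypothesis (1), and that $\mathrm{Hom}(M_\lambda, M_\mu) = 0$ for $\lambda \neq \mu$ by hypothesis (2). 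From this I would deduce that each $M_\lambda$ is contained entirely in $A$ or entirely in $B$: the composite $M_\lambda \hookrightarrow M = A \oplus B \twoheadrightarrow A$, followed by the projection $A \to E \otimes_D M_\mu$ restricted to lie in $M_\mu$, must vanish for $\mu \neq \lambda$ (after clearing denominators, using that $M_\mu$ has no $E$-torsion), so $M_\lambda$'s image in $A$ lands in $M_\lambda$ itself; by indecomposability of $M_\lambda$ it goes wholly into $A$ or wholly into $B$.

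The second step is to rule out a nontrivial splitting given that the $M_\lambda$ are distributed among $A$ and $B$. Since $\Lambda \setminus \{\lambda_0\}$ is connected to $\lambda_0$ through the bridging elements $\xi_\lambda^{-1}(a_\lambda + b_\lambda)$, I would argue that if $M_{\lambda_0} \subseteq A$ then every $M_\lambda \subseteq A$: write $M_{\lambda_0} \subseteq A$ and suppose $M_\lambda \subseteq B$ for some $\lambda \neq \lambda_0$. The element $m = \xi_\lambda^{-1}(a_\lambda + b_\lambda) \in M$ decomposes as $m = m_A + m_B$ with $m_A \in A$, $m_B \in B$; projecting to $E \otimes_D M_\lambda$ and to $E \otimes_D M_{\lambda_0}$ and using that these projections respect the decomposition, I get $\xi_\lambda^{-1} a_\lambda$ as the $M_\lambda$-component of $m_B$ and $\xi_\lambda^{-1} b_\lambda$ as the $M_{\lambda_0}$-component of $m_A$. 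Then $m_A \in A \cap M$ would have $M_{\lambda_0}$-component equal to $\xi_\lambda^{-1} b_\lambda$; but computing $A \cap M$ inside the $\lambda_0$-slot shows its elements have $M_{\lambda_0}$-component of the form $b + \sum_{\mu \in S} c_\mu \xi_\mu^{-1} b_\mu$ where $b \in M_{\lambda_0}$, $c_\mu \in D$, and $S$ is the set of indices routed into $A$ — and $\lambda \notin S$. Reducing modulo $\xi_\lambda$ and invoking hypothesis (4) (that $b_\lambda$ is not divisible by $\xi_\lambda$ in $M_{\lambda_0}$) together with a coprimality-type bookkeeping of the $\xi_\mu$'s gives a contradiction; symmetrically hypothesis (3) handles the component in $M_\lambda$. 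Hence all $M_\lambda$ lie in the same summand, say $A$, forcing $\bigoplus_\lambda M_\lambda \subseteq A$, and since $M$ is generated over $\bigoplus_\lambda M_\lambda$ by the elements $\xi_\lambda^{-1}(a_\lambda+b_\lambda) \in M \subseteq A \oplus B$, it follows that $M = A$ and $B = 0$.

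For the rank formula, I would use the characterization $rk(M) = \dim_E (E \otimes_D M)$ recalled just before the lemma. Since $\bigoplus_\lambda M_\lambda \subseteq M \subseteq \bigoplus_\lambda (E \otimes_D M_\lambda)$ and the outer term is $E \otimes_D \bigoplus_\lambda M_\lambda$ (localization commutes with direct sums), tensoring the inclusions with $E$ shows $E \otimes_D M = \bigoplus_\lambda (E \otimes_D M_\lambda)$, so $\dim_E(E\otimes_D M) = \sum_{\lambda} \dim_E(E \otimes_D M_\lambda) = \sum_\lambda rk(M_\lambda)$. The stated formula has an extra factor of $|\Lambda|$, which for infinite $\Lambda$ is absorbed by cardinal arithmetic ($|\Lambda| \cdot \sum_\lambda rk(M_\lambda) = \sum_\lambda rk(M_\lambda)$ when the sum is infinite), and for finite $\Lambda$ one checks the intended reading makes the two sides agree; I would simply record $rk(M) = \sum_{\lambda \in \Lambda} rk(M_\lambda)$ and note the displayed equation is this quantity.

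I expect the main obstacle to be the bookkeeping in the second step: carefully describing the $M_{\lambda_0}$-slot and $M_\lambda$-slot components of a general element of $M$, and extracting from divisibility hypotheses (3) and (4) the contradiction when a bridging generator is split across $A$ and $B$. The subtlety is that many bridging elements feed into the single slot $M_{\lambda_0}$, so one must track which indices are routed to $A$ versus $B$ and argue that the denominators $\xi_\mu$ cannot conspire to make $\xi_\lambda^{-1} b_\lambda$ appear; this is where one uses that $b_\lambda$ is genuinely non-divisible by $\xi_\lambda$ and, implicitly, that the $\xi_\lambda$ behave like distinct irreducibles so no cancellation occurs. The no-$E$-torsion property of each $M_\mu$ (they embed in vector spaces over $E$, being submodules of $E \otimes_D M_\mu$) is used repeatedly to pass between $D$-linear maps and their $E$-linear extensions.
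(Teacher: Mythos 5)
Your proposal follows the same strategy as the paper's proof: use hypothesis (2) to show each $M_\lambda$ is carried into itself by the projections of any decomposition $M=A\oplus B$ (the paper phrases this as $M_\lambda$ being strongly invariant, giving $M_\lambda=(M_\lambda\cap A)\oplus(M_\lambda\cap B)$), use hypothesis (1) to place each $M_\lambda$ wholly inside $A$ or $B$, and then use the bridging elements $\xi_\lambda^{-1}(a_\lambda+b_\lambda)$ together with (3) and (4) to force all the $M_\lambda$ into the same summand; essentiality of $\bigoplus_\lambda M_\lambda$ in $M$ (equivalently, your ``generated over'' remark plus torsion-freeness) then kills the other summand. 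The paper's version of the bridging step is slightly slicker than yours: from $\xi_\lambda^{-1}(a_\lambda+b_\lambda)=x+y$ with $x\in A\supseteq M_{\lambda_0}$ and $y\in B\supseteq M_\lambda$, multiplying by $\xi_\lambda$ and comparing components in the direct sum $A\oplus B$ gives $a_\lambda=\xi_\lambda y$ and $b_\lambda=\xi_\lambda x$ at once, so one only needs that $a_\lambda$ and $b_\lambda$ are not divisible by $\xi_\lambda$ \emph{in $M$}; you instead try to describe the slot components of $A\cap M$, which amounts to the same bookkeeping.

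The one genuine gap is the step you explicitly leave open: passing from ``not divisible by $\xi_\lambda$ in $\bigoplus_\mu M_\mu$'' (which is what hypotheses (3)--(4) give) to ``not divisible by $\xi_\lambda$ in $M$.'' Since $M$ contains the new fractions $d\,\xi_\mu^{-1}(a_\mu+b_\mu)$, this is not automatic, and your suspicion that one needs the $\xi_\mu$ to behave like pairwise coprime irreducibles is well founded: no such hypothesis appears in the statement of the lemma, and the paper's own proof simply asserts that non-divisibility in $M$ ``follows from the definition of $M$'' without argument. So your proposal is not a complete proof at exactly the point where the paper's proof is also incomplete; in the only application (Lemma 3.2) the $\xi_\lambda$ are chosen to be pairwise non-associate central irreducibles $c_\delta$, distinct from all the irreducibles used to build the $M_A$, which is precisely the extra input your ``coprimality-type bookkeeping'' would consume. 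Your observation about the rank formula is also correct: the argument yields $rk(M)=\sum_{\lambda}rk(M_\lambda)$, and the printed factor $\left|\Lambda\right|$ is harmless only when the sum is infinite (as it is in the application); for finite $\Lambda$ the displayed formula is literally wrong.
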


\begin{proof}

Let $M=N\oplus L$ be a direct sum decomposition of $M$. By condition 2 above, $M_\lambda$ is strongly invariant in $M$ for each $\lambda\in \Lambda$. The identity

\[M_\lambda=(M_\lambda\cap L)\oplus (M_\lambda\cap N)\]

\noindent follows for all $\lambda\in \Lambda$. Thus, since by condition 1 above $M_\lambda$ is indecomposable for each $\lambda\in\Lambda$, each $M_\lambda$ is contained in either $N$ or $L$. Suppose $M_{\lambda_0}\leq N$. Suppose now that there exists $\lambda\in\Lambda$ such that $M_\lambda\leq L$. In this case equation 

\[\xi_\lambda^{-1}(a_{\lambda}+b_\lambda)=x+y\]

\noindent with $x\in N$ and $y\in L$, implies that $\xi_\lambda$ divides $a_\lambda$ in $L$ and $b_\lambda$ in $N$, which in turn implies that $\xi_\lambda$ divides both $a_\lambda$ and $b_\lambda$ in $M$, a contradiction, since by conditions 3 and 4 above, niether $a_\lambda$ nor $b_\lambda$ are divisible by $\xi_\lambda$ in $\bigoplus_{\lambda\in\Lambda}M_\lambda$, and from this and from the definition of $M$ it follows that niether $a_\lambda$ nor $b_\lambda$ are divisible by $\xi_\lambda$ in $M$. It follows that $M_\lambda\leq N$ for all $\lambda\in \Lambda$. Now, since $\bigoplus_{\lambda\in\Lambda}M_\lambda$ is clearly essential in $M$ we conclude that $M$ is indecomposable. The conclusion on the rank of $M$ is trivial.
\end{proof}

\begin{lem}
Let $\kappa_1,\kappa_2$ be cardinals. Suppose $D$ admits sets of $2\kappa_1+\kappa_2$ non-associate central irreducibles and that $D$ does not admit non-zero infinitely left divisible elements. Then, in this case, there exists a collection of modules $\left\{M_\lambda:\lambda\in\Lambda\right\}$, of cardinality $\kappa_1$, an element $\lambda_0\in\Lambda$, and elements $a_\lambda\in M_\lambda$, $b_\lambda\in M_{\lambda_0}$, and $\xi_\lambda\in D\setminus \left\{0\right\}$, $\lambda\in\Lambda\setminus\left\{\lambda_0\right\}$, all satisfying the conditions of Lemma 3.1.
\end{lem}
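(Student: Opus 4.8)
The plan is to build the modules $M_\lambda$ as suitable localizations of $D$ sitting inside $E$, using the non-associate central irreducibles to control both indecomposability and the vanishing of Hom's. Since we have $2\kappa_1 + \kappa_2$ non-associate central irreducibles available, and the collection $\{M_\lambda : \lambda \in \Lambda\}$ is to have cardinality $\kappa_1$, I would first partition a set $\{p_\lambda^{(1)}, p_\lambda^{(2)} : \lambda \in \Lambda\}$ of $2\kappa_1$ of these irreducibles into pairs indexed by $\Lambda$, reserving the remaining $\kappa_2$ for later use (they are not needed for this lemma, whose conclusion only asks for data satisfying the hypotheses of Lemma 3.1; the $\kappa_2$ irreducibles will enter in the proof of Theorem 1.1 when one counts isomorphism classes). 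For each $\lambda$, let $M_\lambda$ be the $D$-submodule of $E$ generated by the inverses of all products of powers of $p_\lambda^{(1)}$ and $p_\lambda^{(2)}$ — that is, $M_\lambda = D[(p_\lambda^{(1)})^{-1}, (p_\lambda^{(2)})^{-1}] \subseteq E$, which makes sense because these elements are central. Each such $M_\lambda$ clearly lies in $\mathfrak{S}$ with $rk(M_\lambda) = 1$.

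Next I would verify the four hypotheses of Lemma 3.1 for this choice. Condition 1 (indecomposability of each $M_\lambda$): since $M_\lambda$ is a submodule of $E$ it has rank $1$, so $E \otimes_D M_\lambda \cong E$ is $1$-dimensional and $M_\lambda$ cannot split as a direct sum of two non-zero submodules. Condition 2 ($\mathrm{Hom}(M_\lambda, M_\mu) = 0$ for $\lambda \neq \mu$): a non-zero morphism $M_\lambda \to M_\mu \subseteq E$ extends to a non-zero $E$-linear map $E \to E$, hence is multiplication by some non-zero $q \in E$; one then argues that $q \cdot M_\lambda \subseteq M_\mu$ is impossible because $M_\lambda$ contains elements with arbitrarily high powers of $p_\lambda^{(i)}$ in their denominators, while elements of $M_\mu$ have denominators built only from $p_\mu^{(1)}, p_\mu^{(2)}$, and since these irreducibles are non-associate the failure of divisibility is genuine (here is where one uses that $D$ has no non-zero infinitely left divisible elements, to rule out pathological cancellation — although for central irreducibles this is more elementary). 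For conditions 3 and 4 I would pick $\lambda_0 \in \Lambda$ arbitrarily, set $\xi_\lambda = p_\lambda^{(1)}$, take $a_\lambda = 1 \in M_\lambda$ and $b_\lambda = 1 \in M_{\lambda_0}$; then $p_\lambda^{(1)}$ does not divide $1$ in $M_\lambda$ (as $(p_\lambda^{(1)})^{-1} \cdot D[(p_\lambda^{(1)})^{-1}, (p_\lambda^{(2)})^{-1}]$-style denominators are already maximal, but $p_\lambda^{(1)} \cdot x = 1$ would force $x$ to lie outside $M_\lambda$ — wait, $(p_\lambda^{(1)})^{-1} \in M_\lambda$, so I must choose $a_\lambda$ and $\xi_\lambda$ more carefully). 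The correct fix is to let $M_\lambda = D[(p_\lambda^{(1)})^{-1}]$ use only \emph{one} of the two irreducibles as allowed inverse, and use the \emph{other} irreducible $p_\lambda^{(2)}$ as $\xi_\lambda$, with $a_\lambda = 1$: then $p_\lambda^{(2)} \nmid 1$ in $D[(p_\lambda^{(1)})^{-1}]$ precisely because $p_\lambda^{(2)}$ is a non-associate central irreducible and localizing at $p_\lambda^{(1)}$ does not make $p_\lambda^{(2)}$ invertible. Similarly $b_\lambda = 1 \in M_{\lambda_0}$ is not divisible by $p_\lambda^{(2)}$ there, since $p_\lambda^{(2)}$ is distinct from $p_{\lambda_0}^{(1)}$. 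This is the role of needing a fresh pair of irreducibles for each index: one irreducible provides the "direction of divisibility" $M_\lambda$ runs in, the other provides the obstruction $\xi_\lambda$.

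I expect the main obstacle to be the rigorous verification of condition 2, the vanishing of Hom's in the non-commutative setting: one must be careful that a $D$-module map between submodules of $E$ really does come from right multiplication by an element of $E$, and that the localization descriptions $M_\lambda = D[(p_\lambda^{(1)})^{-1}]$ behave well enough — in particular that $(p_\lambda^{(1)})^n \mid d$ in $D$ for all $n$ forces $d = 0$, which is exactly the hypothesis that no non-zero element of $D$ is infinitely left divisible (applied with $b = p_\lambda^{(1)}$). With $a_\lambda, b_\lambda, \xi_\lambda$ chosen as above, conditions 3 and 4 reduce to the single statement that a central irreducible $p$ does not divide $1$ in the localization $D[q^{-1}]$ whenever $p$ and $q$ are non-associate central irreducibles, which follows because divisibility of $1$ by $p$ in $D[q^{-1}]$ would give $p \cdot (u q^{-n}) = 1$ for some $u \in D$, hence $u q^{-n} = p^{-1}$, hence $u = p^{-1} q^n$, which forces $p \mid q^n$ in $D$, contradicting non-associateness and irreducibility by an easy induction on $n$. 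Assembling these verifications gives the collection required by the statement.
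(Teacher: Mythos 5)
Your construction does yield a family of cardinality $\kappa_1$ satisfying conditions 1--4 of Lemma 3.1, and your verification of condition 2 (a nonzero map would have to send $1$, which is infinitely divisible by $p_\lambda^{(1)}$ in $M_\lambda$, to a nonzero element of $M_\mu$ infinitely divisible by $p_\lambda^{(1)}$, which cannot exist) is in substance the paper's own argument. But you have proved the statement only as literally printed, and the printed cardinality ``$\kappa_1$'' is evidently a slip: the lemma feeds into Theorem 1.1, which asserts $\chi(D)\geq 2^{\kappa_1}$, and by the rank formula of Lemma 3.1 a family of $\kappa_1$ rank-one submodules of $E$ produces an indecomposable module of rank only $\kappa_1$. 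The paper's proof builds a family of cardinality $2^{\kappa_1}$: it takes two disjoint families $\left\{a_\gamma\right\},\left\{b_\gamma\right\}$ ($\gamma\in\Gamma$, $\left|\Gamma\right|=\kappa_1$) of non-associate central irreducibles, indexes the modules by the $2^{\kappa_1}$ choice sets $A=\left\{d_\gamma\right\}$ with $d_\gamma\in\left\{a_\gamma,b_\gamma\right\}$, and puts $M_A=\sum_{d_\gamma\in A}D[1/d_\gamma]\subseteq E$. Two distinct choice sets differ in some coordinate $\gamma$, and the element $d_\gamma\in A\setminus B$ is exactly what kills $Hom(M_A,M_B)$. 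This combinatorial step --- extracting $2^{\kappa_1}$ pairwise Hom-orthogonal uniform submodules of $E$ from only $2\kappa_1$ irreducibles --- is the actual content of the lemma, and it is missing from your proposal: your pairing $\lambda\mapsto(p_\lambda^{(1)},p_\lambda^{(2)})$ spends the $2\kappa_1$ irreducibles to obtain only $\kappa_1$ modules, which cannot support the bound in Theorem 1.1.

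A second, smaller divergence: you set $\xi_\lambda=p_\lambda^{(2)}$ and declare the $\kappa_2$ reserved irreducibles unnecessary here. In the paper the $\xi_\lambda$ are drawn precisely from the reserved family $\left\{c_\delta:\delta\in\Delta\right\}$, $\left|\Delta\right|=\kappa_2$, and the freedom to vary which nonempty subset of $\left\{c_\delta\right\}$ the sequence $(\xi_\lambda)$ uses is what later produces the $2^{\kappa_2}-1$ pairwise non-isomorphic modules (they are distinguished by which $c_\delta$ divide nonzero elements). With your fixed choice of $\xi_\lambda$ that count is unavailable. Finally, note that both your argument and the paper's leave the same point informal: that an irreducible non-associate to $q$ cannot divide $dq^{m}$ for all exponents unless $d=0$ requires more than irreducibility in a general Ore domain, where irreducibles need not be prime; this is a shared gap rather than one specific to your proposal.
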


\begin{proof} 
Let $\left\{a_\gamma:\gamma\in \Gamma \right\}, \left\{b_\gamma:\gamma \in\Gamma\right\}$, $\left\{c_\delta :\delta\in\Delta\right\}$ be three disjoint sets consisting of different central irreducibles in $D$ such that no two elements in the union of the three sets are associates in $D$. Suppose $\left|\Gamma\right|=\kappa_1$ and $\left|\Delta\right|=\kappa_2$. Let $\Theta$ be the collection of all sets of the form 

\[\left\{d_\gamma: d_\gamma\in \left\{a_\gamma ,b_\gamma\right\} \forall\gamma\in\Gamma\right\}\]

\noindent It is immediate that $\left|\Theta\right|=2^{\kappa_1}$. Now, for each $A\in \Theta$, let $M_A$ be the following submodule of $E$

\[\sum_{d_{\gamma}\in A}D[1/d_\gamma]\]

\noindent For each $A\in \Theta$, $M_A$ thus defined, being a submodule of the uniform module $E$, is itself uniform, and thus indecomposable, that is, the collection $\left\{M_A:A\in \Theta\right\}$ satisfies condition 1 of Lemma 3.1. Now, let $A,B\in \Theta$ such that $A\neq B$, we prove that $Hom(M_A ,M_B)=\left\{0\right\}$. From the definition of $\Theta$ it is clear that in this case niether $A$ is contained in $B$ nor $B$ is contained in $A$. Let $d_{\gamma}\in A\setminus B$. Let $\Phi\in Hom(M_A ,M_B)$. From the fact that for each $\gamma\in\Gamma$, $a_\gamma$ and $b_\gamma$ are non-associate, central, and irreducible, it is easily seen that no non-zero element in $M_B$ is infinitely divisible by $d_{\gamma}$. From this and from the fact that $1_D$ is infinitely divisible by $d_\gamma$ in $M_A$ it follows that $\Phi(1_D)=0$, and since $D$ is essential in $M_A$, we conclude, from this, that $\Phi=0$. Thus the collection $\left\{M_A:A\in \Theta\right\}$ satisfies condition 2 of Lemma 3.1. Finally, let $\Theta$ play the rôle of $\Lambda$, let $A_0\in \Theta$ play the rôle of $\lambda_0$, and let any sequence formed by elements in $\left\{c_\delta :\delta\in\Delta\right\}$ play the rôle of the sequence $\xi_\lambda$, $\lambda\in\Lambda\setminus\left\{\lambda_0\right\}$ in Lemma 3.1. Finally, let any sequence of elements in $A$ and in any sequence of elements in $A_0$, play the rôle of elements $a_\lambda$ and $b_\lambda$ in Lemma 3.1. It is immediate that in this case collection $\left\{M_A :A\in \Theta\right\}$, together with $A_0$ and corresponding subsets of $\left\{c_\delta :\delta\in\Delta\right\}$, $A$, and $A_0$ satisfy the conditions of Lemma 3.1. This concludes the proof.
\end{proof}

\noindent With the aid of Lemma 3.1 and Lemma 3.2 we now prove Theorem 1.1.

\

\noindent \textbf{\textit{Proof of theorem 1.1}} Let $\left\{a_\gamma:\gamma\in\Gamma\right\}$, $\left\{b_\gamma:\gamma\in\Gamma\right\}$, $\left\{c_\delta:\delta\in\Delta\right\}$, and $\Theta$ be as in the proof of lemma 3.2. It follows, from Lemma 3.1 and from the proof of Lemma 3.2 that every sequence with indeces in $\Theta$, formed by elements in any non-empty subset of $\left\{c_\delta:\delta\in\Delta\right\}$, defines an indecomposable module of rank $2^{\kappa_1}$. Let $S,T$ be two different non-empty subsets of $\left\{c_\delta:\delta\in\Delta\right\}$. Let $s,t$ be two sequences with indeces in $\Theta$, formed by elements in $S$ and $T$ respectively. Finally, let $M_s,M_t$ be indecomposable modules of rank $2^{\kappa_1}$, defined, as in the proof of Lemma 3.2, by sequences $s$ and $t$ respectively. Suppose, without any loss of generality, that $S\setminus T\neq \emptyset$. Let $a\in S\setminus T$. Then, by the way they were constructed, $M_s$ admits non-zero elements divisible by $a$, while $M_t$ does not. It follows that $M_s$ and $M_t$ are non-ismorphic. We conclude, from this, from Lemma 3.1, and from Lemma 3.2, that there exist at least $2^{\kappa_2}$ non-ismorphic indecomposable modules of rank $2^{\kappa_1}$. This concludes the proof. $\blacksquare$

\end{document}